\newcommand{\arxiv}[1]{\href{http://arxiv.org/abs/#1}{\tt arXiv:\nolinkurl{#1}}}
\newcommand{\googlebooks}[1]{(preview at \href{http://books.google.com/books?id=#1}{google books})}
\theoremstyle{plain}
\newtheorem{prop}{Proposition}[section]
\newtheorem{thm}[prop]{Theorem}
\newtheorem{lem}[prop]{Lemma}
\newtheorem*{cor*}{Corollary}
\newtheorem{fact}{Fact}
\numberwithin{equation}{section}
\theoremstyle{remark}
\newtheorem*{rem*}{Remark}               
\newtheorem*{ex*}{Example}                
\theoremstyle{definition}
\newtheorem*{defn*}{Definition}             
\theoremstyle{plain}
\newcounter{comment}
\newcommand{\noop}[1]{}
\def\clap#1{\hbox to 0pt{\hss#1\hss}}
\def\semicolon{;}
\def\applytolist#1{
    \expandafter\def\csname multi#1\endcsname##1{
        \def\multiack{##1}\ifx\multiack\semicolon
            \def\next{\relax}
        \else
            \csname #1\endcsname{##1}
            \def\next{\csname multi#1\endcsname}
        \fi
        \next}
    \csname multi#1\endcsname}
\def\calc#1{\expandafter\def\csname c#1\endcsname{{\mathcal #1}}}
\def\bbc#1{\expandafter\def\csname bb#1\endcsname{{\mathbb #1}}}
\def\bfc#1{\expandafter\def\csname bf#1\endcsname{{\mathbf #1}}}
\renewcommand{\imath}{\mathfrak{i}}
\renewcommand{\jmath}{\mathfrak{j}}
\newcommand{\iso}{\cong}
\newcommand{\hashdef}[2]{\@namedef{#1}{#2}}
\newcommand{\hashlookup}[1]{\@nameuse{#1}}
\newcommand{\pathtographs}{../../graphs/}}%
\newcommand{\pathtographs}{diagrams/graphs/}}
\newcommand{\bigraph}[1]{{\hspace{-3pt}\begin{array}{c}%
  \raisebox{-2.5pt}{\includegraphics[height=6mm]{\pathtographs \hashlookup{#1}}}%
\end{array}\hspace{-3pt}}}
\newcommand{\Hom}{\operatorname{Hom}}
\newcommand{\End}{\operatorname{End}}
\newcommand{\Rep}{\operatorname{Rep}}
\title{The centre of the extended Haagerup subfactor has 22 simple objects}
\author{Scott Morrison and Kevin Walker}
\begin{document}
\maketitle

We explain a technique for discovering the number of simple objects in $Z(\cC)$, the center of a fusion category $\cC$, as well as the combinatorial data of the induction and restriction functors at the level of Grothendieck rings. The only input is the fusion ring $K(\cC)$ and the dimension function $K(\cC) \to \mathbb{C}$.

The method is not guaranteed to succeed (it may give spurious answers besides the correct one, or it may simply take too much computer time), but it seems it often does. We illustrate by showing that there are 22 simple objects in the center of the extended Haagerup subfactor \cite{0909.4099}, and that the induction functors from the $6$-object and $8$-object fusion categories arising as the even parts of the subfactor are given by
\begin{align*}
I_{EH1} & = \left(
\begin{array}{cccccccccccccccccccccc}
 1 & 1 & 1 & 1 & 1 & 1 & 0 & 0 & 0 & 0 & 0 & 0 & 0 & 0 & 0 & 0 & 0 & 0 & 0 & 0 & 0 & 0 \\
 2 & 1 & 1 & 1 & 0 & 0 & 1 & 1 & 1 & 1 & 1 & 1 & 1 & 1 & 0 & 0 & 0 & 0 & 1 & 1 & 1 & 1 \\
 1 & 2 & 1 & 1 & 0 & 3 & 2 & 2 & 2 & 2 & 2 & 2 & 2 & 2 & 1 & 1 & 1 & 1 & 1 & 1 & 1 & 1 \\
 2 & 4 & 4 & 1 & 0 & 2 & 4 & 4 & 4 & 4 & 3 & 3 & 3 & 3 & 3 & 3 & 3 & 3 & 1 & 1 & 1 & 1 \\
 4 & 5 & 2 & 1 & 0 & 3 & 5 & 5 & 5 & 5 & 3 & 3 & 3 & 3 & 4 & 4 & 4 & 4 & 1 & 1 & 1 & 1 \\
 1 & 3 & 2 & 1 & 0 & 2 & 3 & 3 & 3 & 3 & 1 & 1 & 1 & 1 & 2 & 2 & 2 & 2 & 1 & 1 & 1 & 1
\end{array}
\right) \\
I_{EH2} & = \left(
\begin{array}{cccccccccccccccccccccc}
 2 & 1 & 1 & 1 & 1 & 0 & 0 & 0 & 0 & 0 & 0 & 0 & 0 & 0 & 0 & 0 & 0 & 0 & 0 & 0 & 0 & 0 \\
 1 & 1 & 1 & 1 & 0 & 1 & 1 & 1 & 1 & 1 & 1 & 1 & 1 & 1 & 0 & 0 & 0 & 0 & 1 & 1 & 1 & 1 \\
 2 & 2 & 1 & 1 & 0 & 2 & 2 & 2 & 2 & 2 & 2 & 2 & 2 & 2 & 1 & 1 & 1 & 1 & 1 & 1 & 1 & 1 \\
 1 & 4 & 4 & 1 & 0 & 3 & 4 & 4 & 4 & 4 & 3 & 3 & 3 & 3 & 3 & 3 & 3 & 3 & 1 & 1 & 1 & 1 \\
 3 & 4 & 2 & 1 & 0 & 2 & 4 & 4 & 4 & 4 & 2 & 2 & 2 & 2 & 3 & 3 & 3 & 3 & 1 & 1 & 1 & 1 \\
 3 & 4 & 2 & 1 & 0 & 2 & 4 & 4 & 4 & 4 & 2 & 2 & 2 & 2 & 3 & 3 & 3 & 3 & 1 & 1 & 1 & 1 \\
 1 & 1 & 0 & 0 & 0 & 1 & 1 & 1 & 1 & 1 & 1 & 1 & 1 & 1 & 1 & 1 & 1 & 1 & 0 & 0 & 0 & 0 \\
 1 & 1 & 0 & 0 & 0 & 1 & 1 & 1 & 1 & 1 & 1 & 1 & 1 & 1 & 1 & 1 & 1 & 1 & 0 & 0 & 0 & 0
\end{array}
\right)
\end{align*}

Observe that the fifth column corresponds to $\mathbf{1} \in Z(\cC)$, and that there are four sets of four objects in $Z(\cC)$ which each restrict the same way to both $EH1$ and $EH2$.

Of course, it would also be interesting to compute all of $K(Z(\cC))$, and even more interesting to compute the $S$ and $T$ matrices. We don't address those questions here.

\subsection*{Acknowledgements}
We'd like to thank Brendan McKay for suggesting the reduction used in Theorem \ref{thm:reduce}, and Pinhas Grossman and Noah Snyder for interesting discussions while we were doing this work. Scott Morrison was supported by an Australian Research Council `Discovery Early Career Researcher Award', DE120100232 and `Discovery Project' DP140100732. Scott Morrison and Kevin Walker were supported by DOD-DARPA grant HR0011-12-1-0009. We would like to thank the Erwin Schr\"odinger Institute and its 2014 programme on ``Modern Trends in Topological Quantum Field Theory'' for their hospitality.

\section{Fusion categories and centers}
Given a fusion 2-category $\cC$, we use the following facts about its center, the modular tensor category $Z(\cC)$.

\begin{fact}
For each simple object $X \in Z(\cC)$, $\dim(X)$ divides $\dim(\cC)$ as an algebraic integer. 
\end{fact}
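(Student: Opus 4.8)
The plan is to reduce to a standard number‑theoretic statement about modular categories, quoting two well‑known structural facts about the Drinfeld center: that $Z(\cC)$ is a modular tensor category, and that $\dim(Z(\cC)) = \dim(\cC)^2$ (Müger). Granting these, it suffices to prove that for \emph{any} modular category $\cM$ and any simple $X \in \cM$, the quotient $\dim(\cM)/\dim(X)^2$ is an algebraic integer: applying this to $\cM = Z(\cC)$ shows $\dim(\cC)^2/\dim(X)^2$ is an algebraic integer, and then $\dim(\cC)/\dim(X)$, being a root of the monic polynomial $t^2 - \dim(\cC)^2/\dim(X)^2$ with algebraic‑integer coefficients, is itself an algebraic integer since integrality over $\Integer$ is transitive.

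For the statement about $\cM$, I would use the unnormalised $S$-matrix $\tilde S = (\tilde S_{ij})_{i,j \in I}$ indexed by the simple objects $\{X_i\}$ (with $X_0 = \mathbf 1$), normalised so that it is symmetric and $\tilde S_{0i} = \dim(X_i)$. Modularity supplies exactly the two ingredients needed. First, $\tilde S^2 = \dim(\cM)\, C$, where $C_{ij} = \delta_{i,j^*}$ is the charge‑conjugation matrix. Second, the Verlinde formula: the columns $(\tilde S_{jm})_{j\in I}$ of $\tilde S$ form a common eigenbasis for all the fusion matrices $N_i$ (left multiplication by $X_i$ on $K(\cM)$), with $N_i$ acting on the $m$-th column by the scalar $\tilde S_{im}/\tilde S_{0m} = \tilde S_{im}/\dim(X_m)$. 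Since $N_i$ has non‑negative integer entries, every eigenvalue of $N_i$ is an algebraic integer; in particular, reading off the eigenvalues of $N_m$, each number $\tilde S_{mk}/\dim(X_k)$ (for $k \in I$) is an algebraic integer.

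Now take the $(i,i^*)$ entry of $\tilde S^2 = \dim(\cM)\,C$, use symmetry of $\tilde S$, and divide through by $\dim(X_i)^2 = \dim(X_i)\dim(X_{i^*})$:
\[
\frac{\dim(\cM)}{\dim(X_i)^2} \;=\; \sum_{m \in I} \frac{\tilde S_{mi}}{\dim(X_i)}\cdot\frac{\tilde S_{m i^*}}{\dim(X_{i^*})}.
\]
Each of the two factors in each summand is one of the algebraic‑integer eigenvalues of $N_m$ identified above, and algebraic integers form a ring, so the left‑hand side is an algebraic integer. This proves the claim for $\cM$, and hence the Fact.

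I do not anticipate a real obstacle: the argument is a repackaging of standard results, and the delicate points are only bookkeeping — pinning down the normalisation of $\tilde S$ so that it is symmetric with $\tilde S_{0i}=\dim(X_i)$, and recalling both $\dim(X_{i^*})=\dim(X_i)$ and the precise form of the Verlinde eigenvalues. The one genuinely non‑elementary input, which we cite rather than reprove, is Müger's identity $\dim(Z(\cC)) = \dim(\cC)^2$; without it the same computation only yields $\dim(X)^2 \mid \dim(Z(\cC))$.
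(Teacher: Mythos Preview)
Your argument is correct and follows exactly the route the paper indicates: the paper simply cites Lemma~1.2 of \cite{MR1617921} (the statement that $\dim(X)^2 \mid \dim(\cM)$ for a modular category $\cM$) together with M\"uger's identity $\dim(Z(\cC))=\dim(\cC)^2$, and you have reproduced the standard $S$-matrix/Verlinde proof of that lemma and then taken the same square-root step. So this is essentially the paper's approach with the cited lemma unpacked.
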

(This follows from Lemma 1.2 of \cite{MR1617921} and $\dim(Z(\cC)) = \dim(\cC)^2$.)

For a fusion 2-category $\dim(\cC)$ can be computed as the sum of $\dim(X)^2$ over $X$ in the collection of simple endomorphisms of any chosen object.

\begin{fact}
For each simple object $X \in Z(\cC)$, $\dim(X)$ is a $d$-number in the sense of Ostrik. \cite{0810.3242}
\end{fact}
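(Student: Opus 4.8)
The plan is to derive this from two inputs: the structure of $Z(\cC)$ as a modular tensor category with $\dim(Z(\cC)) = \dim(\cC)^2$, and Ostrik's theorem that every formal codegree of a fusion category is a $d$-number \cite{0810.3242}; the passage between them is a short computation of formal codegrees for modular categories, followed by an elementary ideal-theoretic argument. Throughout, write $d_X = \dim(X)$; note that $d_X$ is an algebraic integer, being an eigenvalue of the integer fusion matrix $N_X$, so that $d_X$ and $d_X^2$ are legitimate candidates to be $d$-numbers.

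First I would recall the formal codegrees of a modular category $\cD$. Since $K(\cD)\otimes\mathbb{C}$ is commutative and semisimple with one-dimensional characters $Z \mapsto S_{ZY}/S_{0Y}$ indexed by the simple objects $Y$, the Verlinde orthogonality relations (unitarity of the normalized $S$-matrix) give that the formal codegree attached to $Y$ is
\[
  f_Y \;=\; \frac{\dim(\cD)}{d_Y^2}.
\]
Applying Ostrik's theorem to $\cD = Z(\cC)$ shows that $\dim(\cC)^2/d_X^2$ is a $d$-number for every simple $X \in Z(\cC)$; the case $X = \mathbf{1}$ records that $\dim(\cC)^2 = \dim(Z(\cC))$ is itself a $d$-number (equivalently, the global dimension of a fusion category is a $d$-number).

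It remains to go from ``$\dim(\cC)^2$ and $\dim(\cC)^2/d_X^2$ are $d$-numbers'' to ``$d_X$ is a $d$-number''. For this I would use the characterization that an algebraic integer is a $d$-number exactly when it generates a Galois-stable ideal, a condition checkable inside any number field $L$ that is Galois over $\mathbb{Q}$ and contains the element together with all its conjugates; take $L$ to contain $\dim(\cC)$, $d_X$ and their conjugates, and work in the Dedekind ring $\mathcal{O}_L$, where nonzero ideals are invertible. From the factorization $\dim(\cC)^2 = d_X^2 \cdot \big(\dim(\cC)^2/d_X^2\big)$ of elements of $\mathcal{O}_L$ and the Galois-stability of the ideals generated by the two outer factors, cancelling the (invertible, Galois-stable) ideal $\big(\dim(\cC)^2/d_X^2\big)$ forces $\big(d_X^2\big)$ to be Galois-stable; and since $\big(d_X^2\big) = \big(d_X\big)^2$, unique factorization of ideals in $\mathcal{O}_L$ lets one halve all prime exponents, so $\big(d_X\big)$ is Galois-stable and $d_X$ is a $d$-number.

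The work here is bookkeeping rather than difficulty: one must check that every numerator and quotient involved is genuinely an algebraic integer before invoking closure properties of $d$-numbers (in particular that formal codegrees, hence $\dim(\cC)^2/d_X^2$, are algebraic integers), and one must phrase the ``division'' and ``square root'' stability properties of $d$-numbers correctly at the level of ideals. Should the literature contain it, the whole argument can be shortcut by citing directly that the $S$-matrix entries $S_{0X} = \dim(X)$ of a modular category are $d$-numbers; and the multiset $\{\dim(\cC)^2/d_X^2\}$ arises equally as the collection of formal codegrees of $Z(\cC)$ via the canonical Lagrangian algebra $\bigoplus_X X \boxtimes \overline{X}$ of $Z(Z(\cC)) \simeq Z(\cC)\boxtimes\overline{Z(\cC)}$.
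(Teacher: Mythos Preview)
The paper does not prove this fact at all: it is stated as a \emph{Fact} with a bare citation to Ostrik's paper \cite{0810.3242}, where the result is established (indeed, Ostrik shows formal codegrees of any fusion category are $d$-numbers and deduces, via exactly the modular-category computation you outline, that dimensions of simples in a modular category are $d$-numbers). So there is no ``paper's own proof'' to compare against; you have supplied one where the authors were content to cite.

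Your argument is correct. The identification of the formal codegrees of a modular category $\cD$ with $\dim(\cD)/d_Y^2$ is standard Verlinde bookkeeping, and the ideal-theoretic passage from ``$\dim(\cC)^2$ and $\dim(\cC)^2/d_X^2$ are $d$-numbers'' to ``$d_X$ is a $d$-number'' is sound: in a Galois number field containing everything, Galois-stability of $(d_X^2)=(d_X)^2$ follows by cancelling the invertible Galois-stable ideal $(\dim(\cC)^2/d_X^2)$ from $(\dim(\cC)^2)$, and then unique factorization of ideals lets you halve exponents to get Galois-stability of $(d_X)$. This is essentially Ostrik's own route to the corollary, so you have reconstructed the content behind the citation rather than offered an alternative.
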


Recall that an algebraic integer with minimal polynomial $p(x)=\sum_{i=0}^n a_i x^i$ is a $d$-number if $a_0^i$ divides $a_{n-i}^n$ for each $i$.

\begin{fact}
For each object $a \in \cC$, there is an induction functor $I: \End_\cC(a) \to Z(\cC)$ which is a pivotal functor. In particular, it induces a ring homomorphism $K(\End_\cC(a)) \to K(Z(\cC))$, and it preserves dimensions.
\end{fact}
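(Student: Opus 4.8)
The plan is to construct $I$ as a two-sided adjoint of the forgetful functor and then to read its properties off from the structure of the centre. The preliminary step is the identification $Z(\cC)\simeq Z\bigl(\End_\cC(a)\bigr)$: for a connected fusion $2$-category each $\Hom$-category $\Hom_\cC(a,b)$ is an invertible $\End_\cC(b)$--$\End_\cC(a)$-bimodule category, so the fusion categories $\End_\cC(a)$ are pairwise Morita equivalent, and Morita equivalent fusion categories have braided-equivalent Drinfeld centres (M\"uger; Etingof--Nikshych--Ostrik; Schauenburg). Writing $\cD:=\End_\cC(a)$ and $F\colon Z(\cD)\to\cD$ for the forgetful functor, $F$ is exact and $\cD$ is a finite tensor category, so $F$ has both a left and a right adjoint; since $\cD$ is fusion these coincide up to twisting by duals, and I take $I$ to be this common adjoint. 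For computations I would use the explicit model $I(X)\cong\bigoplus_i X_i\tensor X\tensor X_i^{*}$, summing over a complete set of simples $X_i$ of $\cD$, equipped with the half-braiding obtained by summing the ``crossing'' maps assembled from the evaluation and coevaluation morphisms --- equivalently, induction from the diagonal subalgebra of the tube algebra; this agrees with the adjoint description by Frobenius reciprocity.

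The heart of the argument is checking that $I$ is a pivotal functor. The spherical structure on $Z(\cC)$ is the one for which $F$ is a pivotal monoidal functor: $F$ strictly preserves duals and intertwines the pivotal isomorphisms. On the model $\bigoplus_i X_i\tensor X\tensor X_i^{*}$ the candidate isomorphisms $I(X^{*})\cong I(X)^{*}$ and $I(X)\cong I(X)^{**}$ are the ones inherited from the dualities of $\cD$, so what remains is to verify that they are coherent with the half-braiding, and this is exactly where sphericity of $\cD$ enters --- the ``left'' and ``right'' forms of the crossing maps agree precisely because the left and right categorical traces on $\cD$ agree. I expect this coherence verification to be the only genuinely non-formal step. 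One caveat is that $\cC$ is a $2$-category, so $\cD$ sits as a corner inside the multifusion category $\bigoplus_{b,c}\Hom_\cC(b,c)$ and the bookkeeping has to be done in that multifusion picture; this is routine but not vacuous.

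Given that $I$ is a pivotal functor, the remaining assertions follow formally: it induces the stated homomorphism $K(\End_\cC(a))\to K(Z(\cC))$ on Grothendieck rings, and it preserves dimensions. In practice I would also record the restriction-side statement, since it is the form that actually feeds the computation: the forgetful functor $F$ is strong monoidal and pivotal, hence induces a dimension-preserving ring homomorphism $K(Z(\cC))\to K(\End_\cC(a))$, and Frobenius reciprocity $\Hom_{Z(\cC)}(IX,Z)\cong\Hom_\cD(X,FZ)$ identifies the matrix of $I$ on Grothendieck rings with the transpose of the matrix of $F$, which is precisely the integrality and positivity data the method relies on. To summarise: the Morita-invariance identification and the existence of the adjoint are standard, the real work is the compatibility of the half-braiding with the duality and pivotal structures, and the Grothendieck-ring and dimension statements then drop out.
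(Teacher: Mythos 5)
Your argument is correct, but it takes a genuinely different route from the paper. The paper's one‑line justification realizes $Z(\cC)$ as $\Rep\cC(S^1)$, the representation category of the annular category of $\cC$, and obtains $I$ from the inclusion of a rectangle into the annulus; in that topological picture pivotality and the Hom‑space formula of the next Fact are read off directly from planar isotopy. You instead invoke Morita invariance of the Drinfeld centre to reduce to $Z(\End_\cC(a))$ and then build $I$ as the two‑sided adjoint of the forgetful functor $F$, with the explicit model $I(X)\cong\bigoplus_i X_i\otimes X\otimes X_i^{*}$; this is more algebraic, makes Frobenius reciprocity and the restriction‑composed‑with‑induction formula transparent, and isolates the one non‑formal point (compatibility of the half‑braiding with the spherical duality) that the geometric picture absorbs silently. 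One small caveat applies equally to your middle paragraph and to the paper's own phrasing: $I$ is not a monoidal functor (for instance $I(\mathbf{1})$ is the direct sum of all simples of $Z(\cC)$ restricting to $\mathbf{1}$, not the unit), so ``it induces a ring homomorphism'' and ``preserves dimensions'' cannot be taken at face value for $I$ itself. Your closing restriction‑side statement is the correct reading and is exactly what the algorithm uses: $F$ is monoidal and pivotal, hence dimension‑preserving, and by Frobenius reciprocity the matrix of $I$ on Grothendieck groups is the transpose of the matrix of $F$, so the constraint $v_a\cdot w=\dim(Y)$ on each column $w$ of $\cA$ comes from $F$. Making that substitution is a genuine clarification rather than a cosmetic one.
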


The center may be realized as $\Rep \cC(S^1)$, the representation category of the annular category of $\cC$, and the induction functor is given by the inclusion of the rectangle in the annulus.

\begin{fact}
Given $X \in \End_\cC(a)$ and $Y \in \End_\cC(b)$,
the space $\Hom_{Z(\cC)}(I(X), I(Y))$ has a basis
$$
\begin{tikzpicture}
\draw[ultra thick] (0,0) circle (2cm);
\draw[ultra thick] (0,0) circle (1cm);
\draw (0,0) circle (1.5cm);
\draw[->] (1.5,0) -- (1.5,-0.001);
\draw[->] (80:1.5) -- (79.99:1.5);
\node at (90:1.7cm) {$Z$};
\draw (60:1.5) node[above] {$v$} -- +(0,-0.6) node[below left=-2pt] {$X$};
\draw[->] ($(60:1.5)+(0,-0.3)$) -- +(0,0.001);
\draw (120:1.5) node[below] {$u$}  -- +(0,0.58) node[above left=-2pt] {$Y$}; 
\draw[->] ($(120:1.5)+(0,0.29)$) -- +(0,0.001);
\node at (-90:1.7cm) {$W$};
\end{tikzpicture}
$$
where $W,Z \in \Hom_\cC(b,a)$ and $u$ runs over a basis of $\Hom^2_\cC(W, Y \otimes Z)$ while $v$ runs over a basis of $\Hom^2_\cC(Z \otimes X,W)$.

In particular,
\begin{align}
\label{eq:dim}
\dim \Hom_{(Z(\cC)} (I(X), I(Y)) & = \notag\\ \sum_{W,Z \in \Hom_\cC(b,a)}  \dim \Hom^2_\cC&(W, Y \otimes Z) \dim \Hom^2_\cC(Z \otimes X,W).
\end{align}
\end{fact}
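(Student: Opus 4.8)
The plan is to unwind the realization $Z(\cC)\simeq\Rep\cC(S^1)$ recalled above and then run the standard skein-theoretic computation. In this model $I(X)$, for $X\in\End_\cC(a)$, is a free module: it is the string-net space of the annulus $A=S^1\times[0,1]$ with a copy of $X$ pinned to one boundary circle (the adjacent region coloured by $a$), regarded as a module over the annular category $\cC(S^1)$ acting on the other boundary circle. Gluing the two annuli defining $I(X)$ and $I(Y)$ along the acting boundary (Frobenius reciprocity, i.e.\ the defining property of free modules) gives a natural isomorphism
$$
\Hom_{Z(\cC)}(I(X),\,I(Y))\;\cong\;\cC\bigl(A;\,X,\,Y\bigr),
$$
the string-net space of the annulus whose two boundary circles carry $X$ (with adjacent region $a$) and $Y$ (with adjacent region $b$).

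Next I would extract the claimed basis by the standard cutting argument. Cut the annulus along two disjoint radial arcs, each meeting the core circle once, chosen so as to separate the two trivalent vertices; this splits $A$ into two disks. The string-net space of a disk with prescribed boundary decorations is the corresponding $2$-morphism space of $\cC$, and regluing sums over the simple $1$-morphisms $W,Z\colon b\to a$ decorating the two cuts: one disk contributes the vertex $v\in\Hom^2_\cC(Z\otimes X,\,W)$ where the radial $X$-strand meets the core, the other contributes the vertex $u\in\Hom^2_\cC(W,\,Y\otimes Z)$ where the radial $Y$-strand meets the core, and $W,Z$ are the colours of the two arcs into which $u$ and $v$ divide the core circle. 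This is precisely the displayed spanning set; summing out $W$ recovers the compact form $\bigoplus_Z\Hom^2_\cC(Z\otimes X,\,Y\otimes Z)$, i.e.\ the block $e_Y\,\mathrm{Tube}(\cC)\,e_X$ of Ocneanu's tube algebra.

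It then remains to promote ``spanning'' to ``basis''. For this I would pair the standard diagram indexed by $(W,Z,u,v)$ against the one indexed by $(W',Z',u',v')$ in $\Hom_{Z(\cC)}(I(Y),I(X))$ by composing and applying the categorical trace of $Z(\cC)$, which is non-degenerate because $Z(\cC)$ is a semisimple spherical category. Graphically this glues the two annuli and closes up the result; evaluating the resulting closed string net using only sphericality and the fusion relations of $\cC$ yields a Kronecker delta forcing $(W,Z)=(W',Z')$ times a product of $\theta$-symbols and quantum dimensions. Hence the Gram matrix is block diagonal in $(W,Z)$ with invertible blocks --- in particular the two standard spanning sets have the same size, which one can also see directly from the duality-symmetry of the right-hand side of \eqref{eq:dim} --- so both are genuine bases and \eqref{eq:dim} follows by counting dimensions.

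The load-bearing inputs are the realization $Z(\cC)\simeq\Rep\cC(S^1)$ and the description of $I$ as a free-module functor; granting these, the only real work is the bookkeeping of the graphical calculus for a fusion $2$-category --- fixing orientation, duality and pivotal/spherical conventions --- and, within that, checking that the pairing above is genuinely non-degenerate rather than merely that the displayed diagrams span.
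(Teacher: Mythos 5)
Your sketch is correct and takes essentially the same route as the paper: you realize $Z(\cC)$ as $\Rep\cC(S^1)$ with $I$ as the free-module (rectangle-into-annulus) functor, and then your cut-the-annulus-into-two-disks plus Gram-matrix non-degeneracy argument is precisely an unwinding of the ``spine lemma'' that the paper cites without proof. The one place you should be careful if you flesh this out is the claimed block-diagonality and invertibility of the Gram matrix in $(W,Z)$; this is the genuine content of the spine lemma and is exactly the step you correctly flag as the non-trivial bookkeeping.
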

(This follows from the apparently folkloric spine lemma, c.f. \cite{spine-lemma-MO-question}.)

Equivalent to this fact is that the composition of induction from $\End_\cC(a)$ to $Z(\cC)$ followed by restriction from $Z(\cC)$ to $\End_\cC(b)$ is given on objects by $X \mapsto \oplus_V V^*  X  V$, where the sum is over simple objects $V \in \Hom_\cC(a,b)$. (See, e.g. \cite[Proposition 5.4]{MR2183279}.)

With respect to the basis of simples, $I_a: K(\End_\cC(a)) \to K(Z(\cC))$ has a matrix $A_a$, with rows indexed by simples in $\End_\cC(a)$ and columns indexed by simples in $Z(\cC)$.
We order the simples in $\End_\cC(a)$ by dimension. We order the simples in $Z(\cC)$ so that the columns of $A_a$ appear in reverse lexicographic order.

We denote by $M_{ab}$ the matrix whose $ij$ entry is $\dim \Hom_{(Z(\cC)} (I(X), I(Y))$ computed as in Equation \eqref{eq:dim}, for $X$ the $i$-th simple in $\End_\cC(a)$ and $Y$ the $j$-th simple in $\End_\cC(b)$. Equation \eqref{eq:dim} tells us that $M_{ab} = A_a A_b^t$.
We denote by $\cM$ the block matrix whose $ab$ block is $M_{ab}$, and $\cA$ the matrix made by stacking the matrices $A_a$ above each other. Then $\cM = \cA \cA^t$.

Our task now is to compute all possible forms for the matrix $\cA$.

\section{Combinatorics}
We begin with a symmetric $n$-by-$n$ matrix $\cM$ with non-negative integer entries.
A \textbf{decomposition} of $\cM$ is a $n$-by-$m$ matrix $\cA$ (for some $m$) with non-negative integer entries, so $\cM = \cA \cA^t$.

Let $d$ be some algebraic number. Fix some collection of vectors $v_i \in \mathbb{Q}(d)^n$.
Further fix an algebraic number $\cD \in \mathbb{Q}(d)$. We wish to find all $n$-by-$m$ matrices $\cA$ so that $\cM = \cA \cA^t$, and for each column $w$ of $\cA$ and each $i$, $v_i.w$ is an Ostrik $d$-number and divides $\cD$ as an algebraic integer. We call such a decomposition for $(\cM, \{v_i\}, \cD)$ an \textbf{algebraic decomposition}.

Because we work in the fixed number field $\mathbb{Q}(d)$, we can easily compute the minimal polynomial of $w.v_i$, and hence determine if it is an Ostrik $d$-number. 

For each object $a$ of $\cC$, take $v_a$ to be the vector of dimensions of simple endomorphisms of $a$.
If there are $n_a$ simple endomorphisms of $a$, $v_a \in \mathbb{Q}(d)^{n_a}$. We abuse notation and also think of $v_a$ as the corresponding vector in $\mathbb{Q}(d)^n \iso \oplus_a\mathbb{Q}(d)^{n_a}$ by padding with zeroes.

We can summarize the facts from the previous section as
\begin{thm} Let $\cC$ be a fusion 2-category, 
 $\cM$ be the matrix of inner products defined by Equation \eqref{eq:dim}, and $\cA$ be the induction matrix defined above.

Then $\cA$ is an algebraic decomposition of $(\cM, \{v_a\}, \dim(\cC))$.
\end{thm}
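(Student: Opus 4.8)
The statement only repackages Facts~1--4, so the plan is to verify, one at a time, the three conditions in the definition of an algebraic decomposition of the triple $(\cM,\{v_a\},\dim(\cC))$. Two of them are essentially free. The matrix $\cA$ has non-negative integer entries because its entry in the row of a simple $X\in\End_\cC(a)$ and the column of a simple $Y\in Z(\cC)$ is the multiplicity $\dim\Hom_{Z(\cC)}(I_a(X),Y)$. And $\cM=\cA\cA^{t}$ is precisely Equation~\eqref{eq:dim} read blockwise, $M_{ab}=A_aA_b^{t}$, as was already observed before the statement.

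The substance is the third condition: for every column $w$ of $\cA$ and every chosen object $a$, the number $v_a\cdot w$ must be an Ostrik $d$-number that divides $\dim(\cC)$ as an algebraic integer. A column $w$ is indexed by a simple object $Y\in Z(\cC)$, and, unwinding the zero-padding of $v_a$,
$$
v_a\cdot w \;=\; \sum_{X}\dim(X)\,\dim\Hom_{Z(\cC)}\bigl(I_a(X),Y\bigr),
$$
the sum running over (isomorphism classes of) simple $X\in\End_\cC(a)$. The plan is to identify this quantity with $\dim(Y)$, after which Facts~2 and~1 finish the job immediately. Let $R_a\colon Z(\cC)\to\End_\cC(a)$ denote the restriction functor; it is right adjoint to $I_a$ (manifestly so in the annular realization $Z(\cC)=\Rep\cC(S^1)$, and consistently with the description $R_a\circ I_a\colon X\mapsto\bigoplus_{V}V^{*}XV$ recorded just after Fact~4). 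Frobenius reciprocity and semisimplicity then give $\dim\Hom_{Z(\cC)}(I_a(X),Y)=\dim\Hom_{\End_\cC(a)}(X,R_a(Y))$, the multiplicity of $X$ in $R_a(Y)$, so that $v_a\cdot w=\dim\bigl(R_a(Y)\bigr)$. Since $R_a$ is a tensor functor (it forgets the half-braiding), it preserves dimensions, whence $\dim R_a(Y)=\dim(Y)$. Therefore $v_a\cdot w=\dim(Y)$ (notably the same number for every object $a$), and this is an Ostrik $d$-number dividing $\dim(\cC)$ by Facts~2 and~1. This establishes the third condition and proves the theorem.

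I expect the only delicate point to be this middle step: getting the variance of the induction--restriction adjunction right, and confirming that the dimension-preserving member of the pair (namely restriction) is the one that appears once a column of $\cA$ is contracted against the dimension vector $v_a$. A Grothendieck-ring alternative would be to apply the ring homomorphism $I_a$ of Fact~3 to the regular element $\sum_{X}\dim(X)\,[X]$ of $K(\End_\cC(a))\otimes\mathbb{Q}(d)$, note that the coefficient of $[Y]$ in the image is exactly $v_a\cdot w$, and try to recognize the image as the regular element $\sum_{Y}\dim(Y)\,[Y]$ of $K(Z(\cC))$; but that route needs control of how much of $K(Z(\cC))$ the image of $I_a$ spans, so the adjunction argument is preferable. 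Beyond that, the proof is just a careful unwinding of Facts~1--4.
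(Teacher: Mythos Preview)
Your proof is correct and matches the paper's intent: the paper states the theorem as a direct summary of Facts~1--4 without writing out a proof, and you have supplied exactly the missing step, namely the identification $v_a\cdot w=\dim R_a(Y)=\dim(Y)$ via the induction--restriction adjunction and the (trivial) dimension-preservation of restriction. Nothing more is needed.
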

Observe that we can take $d$ to be the Frobenius-Perron eigenvalue of $\cM$, and in fact $d = k \cD$, where $k$ is the number of simple objects of $\cC$.

Since a decomposition $\cA$ being algebraic implies strong conditions on the columns, we intend to enumerate algebraic decompositions by building up the matrix column at a time. First however, we perform a reduction of the problem (replacing $\cM$ above with another, smaller $\cM'$) which will be essential for reasonable runtimes on intended examples.

\begin{thm}
\label{thm:reduce}
Suppose $\cM$ has rank $r$, and the top left $r$-by-$r$ minor $\cM'$ is nonsingular. There is a unique $n$-by-$r$ matrix $\cR$ (with rational entries), so  $\cM = \cR \cM' \cR^t$, and the (not necessarily algebraic) decompositions of $\cM$ are exactly those $\cR \cA'$, for $\cA'$ is a decomposition of $\cM'$, where $\cR \cA'$ has non-negative integer entries.

Moreover, for any collection of vectors $\{v_i\}\subset \mathbb{Q}(d)^n$, the algebraic decompositions of $(\cM, \{v_i\}, \cD)$ correspond in the same way to those algebraic decompositions of $(\cM', \{  v_i \cR \}, \cD)$ with non-negative integer entries.
\end{thm}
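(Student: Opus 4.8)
The plan is to write $\cM$ in block form $\cM = \psmallmatrix{\cM' & \cM'' \\ (\cM'')^{t} & \cM'''}$, with $\cM'$ the nonsingular top-left $r$-by-$r$ block, and to exhibit $\cR$ explicitly. Since $\cM$ has rank $r$ and its first $r$ rows $M_0$ (the matrix $(\cM' \mid \cM'')$) are linearly independent — they contain the nonsingular block $\cM'$ — they span the row space of $\cM$, so there is a unique rational matrix $\cR$ with $\cM = \cR M_0$; solving this row by row gives $\cR = \psmallmatrix{I_r \\ (\cM'')^{t}(\cM')^{-1}}$ (rational, since $\cM'$ is a nonsingular integer matrix), and consistency of the bottom-right block is exactly the vanishing of the Schur complement $\cM''' - (\cM'')^{t}(\cM')^{-1}\cM''$, which the rank hypothesis guarantees. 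Using the symmetry of $\cM'$ one checks $M_0 = \cM'\cR^{t}$, hence $\cM = \cR M_0 = \cR\cM'\cR^{t}$; this is the $\cR$ of the statement.

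For the decomposition correspondence, one inclusion is immediate: if $\cA'$ decomposes $\cM'$ and $\cR\cA'$ has non-negative integer entries, then $(\cR\cA')(\cR\cA')^{t} = \cR\cM'\cR^{t} = \cM$. For the converse, given any decomposition $\cA$ of $\cM$, take $\cA'$ to be its first $r$ rows: then $\cA'(\cA')^{t}$ is the top-left block of $\cA\cA^{t} = \cM$, namely $\cM'$, so $\cA'$ decomposes $\cM'$ and is visibly a non-negative integer matrix; it remains to check $\cA = \cR\cA'$. Writing $\cA = \psmallmatrix{\cA' \\ \cA_2}$, the crux is the rank identity $\mathrm{rank}(\cA) = \mathrm{rank}(\cA\cA^{t}) = \mathrm{rank}(\cM) = r = \mathrm{rank}(\cM') = \mathrm{rank}(\cA'(\cA')^{t}) \le \mathrm{rank}(\cA')$, which forces $\cA$ and $\cA'$ to have the same $r$-dimensional row space; thus $\cA_2 = C\cA'$ for some $(n-r)$-by-$r$ rational matrix $C$. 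Reading off the bottom-left block of $\cA\cA^{t} = \cM$ gives $C\cM' = \cA_2(\cA')^{t} = (\cM'')^{t}$, so $C = (\cM'')^{t}(\cM')^{-1}$, the lower block of $\cR$; hence $\cA = \cR\cA'$. Since the top $r$ rows of $\cR$ are $I_r$, the map $\cA' \mapsto \cR\cA'$ is injective, so this is a bijection onto the decompositions of $\cM$.

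For the algebraic refinement, under $\cA = \cR\cA'$ the columns correspond by $w = \cR w'$, and for each $i$ we have $v_i \cdot w = v_i^{t}\cR w' = (v_i\cR)\cdot w'$. Hence $v_i\cdot w$ is an Ostrik $d$-number dividing $\cD$ precisely when $(v_i\cR)\cdot w'$ is, so $\cA$ is an algebraic decomposition of $(\cM, \{v_i\}, \cD)$ exactly when $\cA'$ is an algebraic decomposition of $(\cM', \{v_i\cR\}, \cD)$ for which $\cR\cA'$ has non-negative integer entries. This step is just unwinding the definitions once the decomposition correspondence is in hand.

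The main obstacle is the converse in the decomposition step: showing that an arbitrary non-negative integer decomposition $\cA$ of $\cM$ factors through $\cR$. All the content there is the identity $\mathrm{rank}(\cA\cA^{t}) = \mathrm{rank}(\cA)$ — valid over $\mathbb{Q}$ because $\cA^{t}y = 0 \iff \cA\cA^{t}y = 0$, the standard bilinear form being positive definite — which collapses the row space of $\cA$ to dimension $r$ and thereby pins $\cA_2$ down to $(\cM'')^{t}(\cM')^{-1}\cA'$; everything else is block bookkeeping and the Schur-complement identity already used in constructing $\cR$.
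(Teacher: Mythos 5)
Your proof is correct and follows the same overall route as the paper's: one direction restricts a decomposition $\cA$ to its top $r$ rows, the other applies $\cR$ to a decomposition of $\cM'$, and the algebraic refinement is the same one-line observation $v_i.(\cR w) = (v_i\cR).w$. Where you add genuine content is at the reconstruction step: the paper asserts without argument that applying $\cR$ to the top $r$ rows of $\cA$ recovers $\cA$, whereas you prove it via the rank identity $\operatorname{rank}(\cA) = \operatorname{rank}(\cA\cA^t)$ over $\mathbb{Q}$ (positive definiteness of the standard form), which forces the row space of $\cA$ to coincide with that of its top $r$ rows $\cA'$, hence $\cA_2 = C\cA'$ for a rational $C$, and the bottom-left block of $\cM$ then identifies $C$ as the lower block $(\cM'')^t(\cM')^{-1}$ of $\cR$. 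You also give $\cR$ explicitly as $\psmallmatrix{I_r \\ (\cM'')^t(\cM')^{-1}}$ with the Schur-complement verification of $\cM = \cR\cM'\cR^t$, and justify the uniqueness of $\cR$ by solving $\cM = \cR M_0$ against the full-row-rank block $M_0$ of the first $r$ rows, whereas the paper asserts existence and uniqueness without construction. In short: same structure, but your version supplies the justifications the paper's terse proof elides.
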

\begin{proof}
Certainly for any decomposition $\cM = \cA \cA^t$ taking $\cA'$ to be the first $r$ rows of $\cA$ gives a decomposition $\cM' = \cA' {\cA'}^t$. In the opposite direction, first note that there is some $n$-by-$r$ matrix $\cR$ (with rational entries) so that $\cM = \cR \cM' \cR^t$. 
Now given a decomposition $\cM' = \cA' {\cA'}^t$, we obtain a not necessarily integral decomposition $\cM = (\cR \cA') (\cR \cA')^t$. Finally, if we obtained $\cA'$ as the first $r$ rows of an $\cA$ satisfying $\cM = \cA \cA^t$, this reconstructs the original $\cA$.

Thus we see that it suffices to search for decompositions $\cM' = \cA' {\cA'}^t$, and take exactly those $\cR \cA'$ which are integral.

For the last part, we see that a column $w$ of $\cA'$ satisfies the algebraic conditions with respect to $\{v_i \cR \}$ exactly if $\cR w$ (the corresponding column of $\cR \cA'$) satisfies the algebraic conditions with respect to $\{v_i\}$, since $v_i.(\cR w) = (v_i \cR).w$.
\end{proof}

A \textbf{partial algebraic decomposition} of $(\cM, \{v_i\}, \cD)$ is a matrix $\cB$ so that $\cM - \cB \cB^t$ is a non-negative matrix, and the columns of $\cB$ satisfy the same conditions, determined by $\{v_i\}$ and $\cD$, as the columns of an algebraic decomposition.

In particular an algebraic decomposition is a partial algebraic decomposition.

\begin{lem}
Deleting a column from a partial algebraic decomposition gives another partial algebraic decomposition.
\end{lem}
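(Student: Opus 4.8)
The plan is to track how the matrix $\cM - \cB\cB^t$ changes when a column is removed. Write $\cB$ for the given partial algebraic decomposition and let $w$ be the column we intend to delete, so that $\cB'$ denotes the matrix obtained from $\cB$ by deleting $w$. The essential algebraic identity is $\cB\cB^t = \cB'{\cB'}^t + w w^t$, hence
\[
\cM - \cB'{\cB'}^t = \left(\cM - \cB\cB^t\right) + w w^t .
\]
First I would observe that $\cM - \cB\cB^t$ is entrywise non-negative by hypothesis, since $\cB$ is a partial algebraic decomposition. Next I would note that $w$ has non-negative integer entries (again part of the hypothesis on $\cB$), so the rank-one matrix $w w^t$ has $(i,j)$ entry $w_i w_j \ge 0$, i.e.\ it too is entrywise non-negative. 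Adding two entrywise non-negative matrices gives an entrywise non-negative matrix, so $\cM - \cB'{\cB'}^t \ge 0$.

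It then remains to check the column conditions, and here there is nothing to do: the columns of $\cB'$ form a subset of the columns of $\cB$, so each of them already satisfies the required conditions (non-negative integer entries, and $v_i.w$ an Ostrik $d$-number dividing $\cD$ for every $i$). Therefore $\cB'$ is a partial algebraic decomposition of $(\cM, \{v_i\}, \cD)$. I do not anticipate any real obstacle: the only point worth stating carefully is that $w w^t$ is entrywise (not merely positive-semidefinite) non-negative, which is exactly what makes the non-negativity of $\cM - \cB'{\cB'}^t$ follow, and this is immediate from the sign constraint on the entries of $w$.
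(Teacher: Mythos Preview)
Your argument is correct, and the paper itself offers no proof of this lemma; it is stated and the text moves immediately on, treating it as evident. Your identity $\cM - \cB'{\cB'}^t = (\cM - \cB\cB^t) + ww^t$ is exactly the point, and the observation that the remaining columns inherit the algebraic conditions is all that is needed.

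One small caution on interpretation: you read ``non-negative matrix'' as entrywise non-negative and explicitly set aside the positive-semidefinite reading. The paper's own remark following condition~(6)---``in practice for the last condition, we numerically estimate all the eigenvalues''---suggests the authors may in fact intend positive semidefinite. Fortunately your proof survives either reading: $ww^t$ is both entrywise non-negative (since $w$ has non-negative entries, as you note) and positive semidefinite (being the Gram matrix of a single vector), so in either case adding it to $\cM - \cB\cB^t$ preserves the relevant notion of non-negativity. It would be safer to record both facts rather than dismissing one.
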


We say a \textbf{new column} for a partial algebraic decomposition $\cB$ is a vector $w$, such that
\begin{enumerate}
\item each $w_i \geq 0$,
\item if $p$ is the greatest number such that the top left $p$-by-$p$ minor of $\cM - \cB \cB^t$ is exactly zero, $w_{p+1} > 0$,
\item writing $u$ for the last column of $\cB$,
if the first $k$ entries of the $u$ agree with the first $k$ entries of $w$, then $w_{k+1} \leq u_{k+1}$,
\item $w_i \leq (\cM - \cB \cB^t)_{ij} / w_j$, for each $j \leq i$, 
\item $w$ satisfies the algebraic conditions determined by $\{v_i\}$ and $\cD$, and
\item $\cM - \cB \cB^t - w w^t$ is a non-negative matrix.
\end{enumerate}
We can clearly enumerate all possible new columns for $\cB$; in practice for the last condition, we numerically estimate all the eigenvalues, accepting $w$ if they are all at least $-0.001$.
Condition (4) is redundant with (6); we include it as a token optimization.

We now have
\begin{thm}
Every partial algebraic decomposition with $k$ columns in reverse lexicographic order may be obtained by appending a new column to some partial algebraic decomposition with $k-1$ columns in reverse lexicographic order.
\end{thm}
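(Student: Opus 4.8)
The plan is to prove this in the only way the construction can go: given a partial algebraic decomposition $\cB$ with $k$ columns in reverse lexicographic order, let $w$ be its last column and let $\cB'$ be the matrix of its first $k-1$ columns, so that $\cB$ is exactly $\cB'$ with $w$ appended. Nothing is left to choose, so the whole task is to verify (a) that $\cB'$ is again a partial algebraic decomposition whose columns are in reverse lexicographic order, and (b) that $w$ is a new column for $\cB'$, i.e.\ that it satisfies conditions (1)--(6). Part (a) is immediate: $\cB'$ is obtained from $\cB$ by deleting a column, so it is a partial algebraic decomposition by the preceding Lemma, and an initial segment of a reverse-lexicographically ordered list of vectors is again reverse-lexicographically ordered.

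Most of part (b) is routine bookkeeping. Conditions (1) and (5) hold because $w$ is one of the columns of the partial algebraic decomposition $\cB$, hence a non-negative integer vector already known to satisfy the algebraic conditions determined by $\{v_i\}$ and $\cD$. Condition (6) is the identity $\cM - \cB'{\cB'}^t - w w^t = \cM - \cB\cB^t$, whose right-hand side is non-negative because $\cB$ is a partial algebraic decomposition; condition (4) then follows from (6), exactly as remarked after the definition of a new column. Condition (3) asserts precisely that the last two columns of $\cB$ stand in reverse lexicographic order, which is part of the hypothesis on $\cB$ (and is vacuous when $k=1$, $\cB'$ being the empty matrix).

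The one condition that needs real work, and which I expect to be the main obstacle, is (2): with $p$ the largest integer such that the top-left $p$-by-$p$ block of $\cM - \cB'{\cB'}^t$ vanishes, one must show $w_{p+1} > 0$. Here I would complete $\cB$ to a full algebraic decomposition $\cA$ with columns in reverse lexicographic order; then $\cM - \cB'{\cB'}^t$ is the sum of $x x^t$ over the columns $x$ of $\cA$ lying beyond $\cB'$, of which $w$ is the first and hence the reverse-lexicographically largest. Each $x x^t$ is entrywise non-negative and positive semidefinite, so vanishing of the top-left $p$-by-$p$ block of the sum forces every such $x$ to have its first $p$ entries zero; then every entry of the top-left $(p+1)$-by-$(p+1)$ block other than the $(p+1,p+1)$ one is already zero, so by maximality of $p$ the $(p+1,p+1)$ entry is strictly positive, i.e.\ $\sum_x x_{p+1}^2 > 0$. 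Hence some column $x$ beyond $\cB'$ has $x_{p+1} > 0$, and since all of these columns have vanishing first $p$ entries and $w$ is the reverse-lexicographically largest of them, $w_{p+1}$ is the largest of the $x_{p+1}$ and therefore positive, which is condition (2). The point requiring care is exactly this appeal to a completion: the argument establishes condition (2) for partial algebraic decompositions that extend to a full one — which is all the column-by-column search actually needs — and making that restriction explicit, or else squeezing condition (2) directly out of the non-negativity and semidefiniteness of $\cM - \cB\cB^t$ together with the reverse-lexicographic ordering, is where I would expect to spend the most effort.
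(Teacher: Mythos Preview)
The paper states this theorem without proof, so there is nothing to compare against directly; your write-up is doing the work the authors left implicit. Your treatment of conditions (1), (3), (4), (5), (6), and of part~(a), is clean and correct.

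Your instinct that condition~(2) is the real issue is exactly right, and in fact the difficulty you flag is genuine: the theorem as literally stated is not true for \emph{every} partial algebraic decomposition in reverse lexicographic order. A small example makes this concrete. Take $\cM = I_3$ and $\cB$ with columns $c_1 = (1,0,0)^t$, $c_2 = (0,0,1)^t$; these are in reverse lexicographic order, and $\cM - \cB\cB^t = \mathrm{diag}(0,1,0)$ is positive semidefinite, so (ignoring the algebraic side conditions, which can easily be arranged to hold) $\cB$ is a partial algebraic decomposition. Deleting the last column gives $\cB'$ with the single column $(1,0,0)^t$, and $\cM - \cB'{\cB'}^t = \mathrm{diag}(0,1,1)$ has $p = 1$. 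Condition~(2) then demands $w_2 > 0$, but $w = (0,0,1)^t$ has $w_2 = 0$. So $w$ is not a new column for $\cB'$, and this $\cB$ is not reached by the procedure.

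Your completion argument is correct, and it proves exactly what the algorithm needs: every \emph{initial segment} of a full algebraic decomposition in reverse lexicographic order is obtained by appending a new column to a shorter such initial segment. The $\cB$ above is not such an initial segment (the unique full decomposition of $I_3$ in reverse lexicographic order has columns $(1,0,0)^t,(0,1,0)^t,(0,0,1)^t$, and $\cB$ skips the middle one), so missing it is harmless. Your closing paragraph already says essentially this; I would simply be more assertive about it. Condition~(2) cannot be squeezed out of positive semidefiniteness and the ordering of $\cB$ alone, so the restriction to extendible partial decompositions is not optional --- it is the correct statement, and the one the enumeration actually relies on.
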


This theorem gives a relatively efficient mechanism for enumerating all algebraic decompositions of a given $(\cM, \{v_i\}, \cD)$. It is implemented in a {\tt Mathematica} notebook available with the {\tt arXiv} sources of this article. That notebook relies on the {\tt FusionAtlas} package introduced in \cite{index5-part1,index5-part2,index5-part3,index5-part4}, although only to prepare the fusion rings of (and calculate the dimesions for) some familiar examples. It should be easy to see how to run it without this dependency.

\section{Calculations}
For the extended Haagerup principal graphs
\begin{equation*}
\left( \bigraph{bwd1v1v1v1v1v1v1v1p1v1x0p1x0duals1v1v1v1v1x2}, \bigraph{bwd1v1v1v1v1v1v1v1p1v1x0p0x1v1x0p0x1duals1v1v1v1v1x2v2x1}\right)
\end{equation*}
there are a unique fusion rings for the two even parts, given by
{
\setlength\arraycolsep{3pt}
\scriptsize
\begin{align*}
&\left(
\begin{array}{cccccc}
 1 & 0 & 0 & 0 & 0 & 0 \\
 0 & 1 & 0 & 0 & 0 & 0 \\
 0 & 0 & 1 & 0 & 0 & 0 \\
 0 & 0 & 0 & 1 & 0 & 0 \\
 0 & 0 & 0 & 0 & 1 & 0 \\
 0 & 0 & 0 & 0 & 0 & 1 \\
\end{array}
\right), 
\left(
\begin{array}{cccccc}
 0 & 1 & 0 & 0 & 0 & 0 \\
 1 & 1 & 1 & 0 & 0 & 0 \\
 0 & 1 & 1 & 1 & 0 & 0 \\
 0 & 0 & 1 & 1 & 1 & 1 \\
 0 & 0 & 0 & 1 & 2 & 1 \\
 0 & 0 & 0 & 1 & 1 & 0 \\
\end{array}
\right), 
\left(
\begin{array}{cccccc}
 0 & 0 & 1 & 0 & 0 & 0 \\
 0 & 1 & 1 & 1 & 0 & 0 \\
 1 & 1 & 1 & 1 & 1 & 1 \\
 0 & 1 & 1 & 2 & 3 & 1 \\
 0 & 0 & 1 & 3 & 3 & 2 \\
 0 & 0 & 1 & 1 & 2 & 1 \\
\end{array}
\right),  \\ &\qquad\qquad
\left(
\begin{array}{cccccc}
 0 & 0 & 0 & 1 & 0 & 0 \\
 0 & 0 & 1 & 1 & 1 & 1 \\
 0 & 1 & 1 & 2 & 3 & 1 \\
 1 & 1 & 2 & 4 & 5 & 3 \\
 0 & 1 & 3 & 5 & 6 & 3 \\
 0 & 1 & 1 & 3 & 3 & 2 \\
\end{array}
\right), 
\left(
\begin{array}{cccccc}
 0 & 0 & 0 & 0 & 1 & 0 \\
 0 & 0 & 0 & 1 & 2 & 1 \\
 0 & 0 & 1 & 3 & 3 & 2 \\
 0 & 1 & 3 & 5 & 6 & 3 \\
 1 & 2 & 3 & 6 & 7 & 4 \\
 0 & 1 & 2 & 3 & 4 & 2 \\
\end{array}
\right), 
\left(
\begin{array}{cccccc}
 0 & 0 & 0 & 0 & 0 & 1 \\
 0 & 0 & 0 & 1 & 1 & 0 \\
 0 & 0 & 1 & 1 & 2 & 1 \\
 0 & 1 & 1 & 3 & 3 & 2 \\
 0 & 1 & 2 & 3 & 4 & 2 \\
 1 & 0 & 1 & 2 & 2 & 1 \\
\end{array}
\right) \\
\intertext{\normalsize and}
&
\left(
\begin{array}{cccccccc}
 1 & 0 & 0 & 0 & 0 & 0 & 0 & 0 \\
 0 & 1 & 0 & 0 & 0 & 0 & 0 & 0 \\
 0 & 0 & 1 & 0 & 0 & 0 & 0 & 0 \\
 0 & 0 & 0 & 1 & 0 & 0 & 0 & 0 \\
 0 & 0 & 0 & 0 & 1 & 0 & 0 & 0 \\
 0 & 0 & 0 & 0 & 0 & 1 & 0 & 0 \\
 0 & 0 & 0 & 0 & 0 & 0 & 1 & 0 \\
 0 & 0 & 0 & 0 & 0 & 0 & 0 & 1 \\
\end{array}
\right), 
\left(
\begin{array}{cccccccc}
 0 & 1 & 0 & 0 & 0 & 0 & 0 & 0 \\
 1 & 1 & 1 & 0 & 0 & 0 & 0 & 0 \\
 0 & 1 & 1 & 1 & 0 & 0 & 0 & 0 \\
 0 & 0 & 1 & 1 & 1 & 1 & 0 & 0 \\
 0 & 0 & 0 & 1 & 1 & 1 & 0 & 1 \\
 0 & 0 & 0 & 1 & 1 & 1 & 1 & 0 \\
 0 & 0 & 0 & 0 & 0 & 1 & 0 & 0 \\
 0 & 0 & 0 & 0 & 1 & 0 & 0 & 0 \\
\end{array}
\right), 
\left(
\begin{array}{cccccccc}
 0 & 0 & 1 & 0 & 0 & 0 & 0 & 0 \\
 0 & 1 & 1 & 1 & 0 & 0 & 0 & 0 \\
 1 & 1 & 1 & 1 & 1 & 1 & 0 & 0 \\
 0 & 1 & 1 & 2 & 2 & 2 & 1 & 1 \\
 0 & 0 & 1 & 2 & 2 & 2 & 1 & 0 \\
 0 & 0 & 1 & 2 & 2 & 2 & 0 & 1 \\
 0 & 0 & 0 & 1 & 1 & 0 & 0 & 0 \\
 0 & 0 & 0 & 1 & 0 & 1 & 0 & 0 \\
\end{array}
\right), 
\left(
\begin{array}{cccccccc}
 0 & 0 & 0 & 1 & 0 & 0 & 0 & 0 \\
 0 & 0 & 1 & 1 & 1 & 1 & 0 & 0 \\
 0 & 1 & 1 & 2 & 2 & 2 & 1 & 1 \\
 1 & 1 & 2 & 4 & 4 & 4 & 1 & 1 \\
 0 & 1 & 2 & 4 & 3 & 4 & 1 & 1 \\
 0 & 1 & 2 & 4 & 4 & 3 & 1 & 1 \\
 0 & 0 & 1 & 1 & 1 & 1 & 0 & 1 \\
 0 & 0 & 1 & 1 & 1 & 1 & 1 & 0 \\
\end{array}
\right), \\ & \qquad\qquad
\left(
\begin{array}{cccccccc}
 0 & 0 & 0 & 0 & 1 & 0 & 0 & 0 \\
 0 & 0 & 0 & 1 & 1 & 1 & 1 & 0 \\
 0 & 0 & 1 & 2 & 2 & 2 & 0 & 1 \\
 0 & 1 & 2 & 4 & 3 & 4 & 1 & 1 \\
 1 & 1 & 2 & 3 & 4 & 3 & 1 & 1 \\
 0 & 1 & 2 & 4 & 3 & 3 & 1 & 1 \\
 0 & 1 & 0 & 1 & 1 & 1 & 1 & 0 \\
 0 & 0 & 1 & 1 & 1 & 1 & 0 & 0 \\
\end{array}
\right), 
\left(
\begin{array}{cccccccc}
 0 & 0 & 0 & 0 & 0 & 1 & 0 & 0 \\
 0 & 0 & 0 & 1 & 1 & 1 & 0 & 1 \\
 0 & 0 & 1 & 2 & 2 & 2 & 1 & 0 \\
 0 & 1 & 2 & 4 & 4 & 3 & 1 & 1 \\
 0 & 1 & 2 & 4 & 3 & 3 & 1 & 1 \\
 1 & 1 & 2 & 3 & 3 & 4 & 1 & 1 \\
 0 & 0 & 1 & 1 & 1 & 1 & 0 & 0 \\
 0 & 1 & 0 & 1 & 1 & 1 & 0 & 1 \\
\end{array}
\right), 
\left(
\begin{array}{cccccccc}
 0 & 0 & 0 & 0 & 0 & 0 & 1 & 0 \\
 0 & 0 & 0 & 0 & 1 & 0 & 0 & 0 \\
 0 & 0 & 0 & 1 & 0 & 1 & 0 & 0 \\
 0 & 0 & 1 & 1 & 1 & 1 & 0 & 1 \\
 0 & 0 & 1 & 1 & 1 & 1 & 0 & 0 \\
 0 & 1 & 0 & 1 & 1 & 1 & 1 & 0 \\
 0 & 0 & 0 & 1 & 0 & 0 & 0 & 0 \\
 1 & 0 & 0 & 0 & 1 & 0 & 0 & 0 \\
\end{array}
\right), 
\left(
\begin{array}{cccccccc}
 0 & 0 & 0 & 0 & 0 & 0 & 0 & 1 \\
 0 & 0 & 0 & 0 & 0 & 1 & 0 & 0 \\
 0 & 0 & 0 & 1 & 1 & 0 & 0 & 0 \\
 0 & 0 & 1 & 1 & 1 & 1 & 1 & 0 \\
 0 & 1 & 0 & 1 & 1 & 1 & 0 & 1 \\
 0 & 0 & 1 & 1 & 1 & 1 & 0 & 0 \\
 1 & 0 & 0 & 0 & 0 & 1 & 0 & 0 \\
 0 & 0 & 0 & 1 & 0 & 0 & 0 & 0 \\
\end{array}
\right).
\end{align*}
}%
(Here, the $j$, $k$ entry of the $i$-th matrix gives the multiplicity of $X_k$ in $X_i X_j$.)
The bimodule category between the even parts has left module structure
{
\setlength\arraycolsep{3pt}
\scriptsize
\begin{align*}
&\left(
\begin{array}{cccccc}
 1 & 0 & 0 & 0 & 0 & 0 \\
 0 & 1 & 0 & 0 & 0 & 0 \\
 0 & 0 & 1 & 0 & 0 & 0 \\
 0 & 0 & 0 & 1 & 0 & 0 \\
 0 & 0 & 0 & 0 & 1 & 0 \\
 0 & 0 & 0 & 0 & 0 & 1 \\
\end{array}
\right), 
\left(
\begin{array}{cccccc}
 1 & 1 & 0 & 0 & 0 & 0 \\
 1 & 1 & 1 & 0 & 0 & 0 \\
 0 & 1 & 1 & 1 & 0 & 0 \\
 0 & 0 & 1 & 2 & 1 & 1 \\
 0 & 0 & 0 & 1 & 1 & 0 \\
 0 & 0 & 0 & 1 & 0 & 1 \\
\end{array}
\right), 
\left(
\begin{array}{cccccc}
 0 & 1 & 1 & 0 & 0 & 0 \\
 1 & 1 & 1 & 1 & 0 & 0 \\
 1 & 1 & 1 & 2 & 1 & 1 \\
 0 & 1 & 2 & 4 & 2 & 2 \\
 0 & 0 & 1 & 2 & 0 & 1 \\
 0 & 0 & 1 & 2 & 1 & 0 \\
\end{array}
\right), \\ & \qquad\qquad
\left(
\begin{array}{cccccc}
 0 & 0 & 1 & 1 & 0 & 0 \\
 0 & 1 & 1 & 2 & 1 & 1 \\
 1 & 1 & 2 & 4 & 2 & 2 \\
 1 & 2 & 4 & 8 & 3 & 3 \\
 0 & 1 & 2 & 3 & 1 & 2 \\
 0 & 1 & 2 & 3 & 2 & 1 \\
\end{array}
\right), 
\left(
\begin{array}{cccccc}
 0 & 0 & 0 & 1 & 1 & 1 \\
 0 & 0 & 1 & 3 & 1 & 1 \\
 0 & 1 & 3 & 5 & 2 & 2 \\
 1 & 3 & 5 & 9 & 4 & 4 \\
 1 & 1 & 2 & 4 & 2 & 1 \\
 1 & 1 & 2 & 4 & 1 & 2 \\
\end{array}
\right), 
\left(
\begin{array}{cccccc}
 0 & 0 & 0 & 1 & 0 & 0 \\
 0 & 0 & 1 & 1 & 1 & 1 \\
 0 & 1 & 1 & 3 & 1 & 1 \\
 1 & 1 & 3 & 5 & 2 & 2 \\
 0 & 1 & 1 & 2 & 1 & 1 \\
 0 & 1 & 1 & 2 & 1 & 1 \\
\end{array}
\right)
\end{align*}
}
and right module structure
{
\setlength\arraycolsep{3pt}
\scriptsize
\begin{align*}
&\left(
\begin{array}{cccccc}
 1 & 0 & 0 & 0 & 0 & 0 \\
 1 & 1 & 0 & 0 & 0 & 0 \\
 0 & 1 & 1 & 0 & 0 & 0 \\
 0 & 0 & 1 & 1 & 0 & 0 \\
 0 & 0 & 0 & 1 & 1 & 0 \\
 0 & 0 & 0 & 1 & 0 & 1 \\
 0 & 0 & 0 & 0 & 0 & 1 \\
 0 & 0 & 0 & 0 & 1 & 0 \\
\end{array}
\right), 
\left(
\begin{array}{cccccc}
 0 & 1 & 0 & 0 & 0 & 0 \\
 1 & 1 & 1 & 0 & 0 & 0 \\
 1 & 1 & 1 & 1 & 0 & 0 \\
 0 & 1 & 1 & 2 & 1 & 1 \\
 0 & 0 & 1 & 2 & 1 & 1 \\
 0 & 0 & 1 & 2 & 1 & 1 \\
 0 & 0 & 0 & 1 & 0 & 0 \\
 0 & 0 & 0 & 1 & 0 & 0 \\
\end{array}
\right), 
\left(
\begin{array}{cccccc}
 0 & 0 & 1 & 0 & 0 & 0 \\
 0 & 1 & 1 & 1 & 0 & 0 \\
 1 & 1 & 1 & 2 & 1 & 1 \\
 1 & 1 & 2 & 4 & 2 & 2 \\
 0 & 1 & 2 & 4 & 1 & 2 \\
 0 & 1 & 2 & 4 & 2 & 1 \\
 0 & 0 & 1 & 1 & 1 & 0 \\
 0 & 0 & 1 & 1 & 0 & 1 \\
\end{array}
\right), \\ & \qquad\qquad
\left(
\begin{array}{cccccc}
 0 & 0 & 0 & 1 & 0 & 0 \\
 0 & 0 & 1 & 2 & 1 & 1 \\
 0 & 1 & 2 & 4 & 2 & 2 \\
 1 & 2 & 4 & 8 & 3 & 3 \\
 1 & 2 & 4 & 7 & 3 & 3 \\
 1 & 2 & 4 & 7 & 3 & 3 \\
 0 & 1 & 1 & 2 & 1 & 1 \\
 0 & 1 & 1 & 2 & 1 & 1 \\
\end{array}
\right), 
\left(
\begin{array}{cccccc}
 0 & 0 & 0 & 0 & 1 & 0 \\
 0 & 0 & 0 & 1 & 0 & 1 \\
 0 & 0 & 1 & 2 & 1 & 0 \\
 0 & 1 & 2 & 3 & 1 & 2 \\
 1 & 1 & 1 & 3 & 2 & 1 \\
 0 & 1 & 2 & 3 & 1 & 1 \\
 1 & 0 & 0 & 1 & 0 & 1 \\
 0 & 0 & 1 & 1 & 0 & 0 \\
\end{array}
\right), 
\left(
\begin{array}{cccccc}
 0 & 0 & 0 & 0 & 0 & 1 \\
 0 & 0 & 0 & 1 & 1 & 0 \\
 0 & 0 & 1 & 2 & 0 & 1 \\
 0 & 1 & 2 & 3 & 2 & 1 \\
 0 & 1 & 2 & 3 & 1 & 1 \\
 1 & 1 & 1 & 3 & 1 & 2 \\
 0 & 0 & 1 & 1 & 0 & 0 \\
 1 & 0 & 0 & 1 & 1 & 0 \\
\end{array}
\right).
\end{align*}
}

From this, we calculate
\begin{equation*}
\cM = \left(
\begin{array}{cccccccccccccc}
 6 & 5 & 8 & 13 & 15 & 9 & 6 & 5 & 8 & 13 & 12 & 12 & 3 & 3 \\
 5 & 19 & 26 & 45 & 52 & 28 & 7 & 17 & 28 & 43 & 41 & 41 & 11 & 11 \\
 8 & 26 & 56 & 93 & 110 & 60 & 6 & 28 & 54 & 95 & 84 & 84 & 26 & 26 \\
 13 & 45 & 93 & 181 & 211 & 115 & 13 & 45 & 93 & 181 & 163 & 163 & 48 & 48 \\
 15 & 52 & 110 & 211 & 259 & 138 & 16 & 51 & 111 & 210 & 199 & 199 & 60 & 60 \\
 9 & 28 & 60 & 115 & 138 & 79 & 8 & 29 & 59 & 116 & 108 & 108 & 30 & 30 \\
 6 & 7 & 6 & 13 & 16 & 8 & 8 & 5 & 8 & 11 & 13 & 13 & 3 & 3 \\
 5 & 17 & 28 & 45 & 51 & 29 & 5 & 17 & 28 & 45 & 40 & 40 & 11 & 11 \\
 8 & 28 & 54 & 93 & 111 & 59 & 8 & 28 & 54 & 93 & 85 & 85 & 26 & 26 \\
 13 & 43 & 95 & 181 & 210 & 116 & 11 & 45 & 93 & 183 & 162 & 162 & 48 & 48 \\
 12 & 41 & 84 & 163 & 199 & 108 & 13 & 40 & 85 & 162 & 154 & 154 & 45 & 45 \\
 12 & 41 & 84 & 163 & 199 & 108 & 13 & 40 & 85 & 162 & 154 & 154 & 45 & 45 \\
 3 & 11 & 26 & 48 & 60 & 30 & 3 & 11 & 26 & 48 & 45 & 45 & 15 & 15 \\
 3 & 11 & 26 & 48 & 60 & 30 & 3 & 11 & 26 & 48 & 45 & 45 & 15 & 15 \\
\end{array}
\right)
\end{equation*}
and find $$\cD = 50 \zeta _{13}^{11}+50 \zeta _{13}^{10}-125 \zeta _{13}^9-125 \zeta _{13}^7-125 \zeta _{13}^6-125 \zeta _{13}^4+50 \zeta _{13}^3+50 \zeta _{13}^2+170,$$
and
\begin{align*}
v_{EH1}  = \Big\{&1, \\
& \zeta _{13}^{11}+\zeta _{13}^{10}+\zeta _{13}^3+\zeta _{13}^2+2, \\
& \zeta _{13}^{11}+\zeta _{13}^{10}-\zeta _{13}^9-\zeta _{13}^7-\zeta _{13}^6-\zeta _{13}^4+\zeta _{13}^3+\zeta _{13}^2+3, \displaybreak[1]\\
& \zeta _{13}^{11}+\zeta _{13}^{10}-3 \zeta _{13}^9-3 \zeta _{13}^7-3 \zeta _{13}^6-3 \zeta _{13}^4+\zeta _{13}^3+\zeta _{13}^2+4, \\
& \zeta _{13}^{11}+\zeta _{13}^{10}-4 \zeta _{13}^9-4 \zeta _{13}^7-4 \zeta _{13}^6-4 \zeta _{13}^4+\zeta _{13}^3+\zeta _{13}^2+4, \displaybreak[1]\\
& \zeta _{13}^{11}+\zeta _{13}^{10}-2 \zeta _{13}^9-2 \zeta _{13}^7-2 \zeta _{13}^6-2 \zeta _{13}^4+\zeta _{13}^3+\zeta _{13}^2+2, \\
& 0,  0,  0,  0,  0,  0,  0, 0 \Big\}\\
v_{EH2}  = \Big\{&0, 0, 0, 0, 0, 0, 1, \\
& \zeta _{13}^{11}+\zeta _{13}^{10}+\zeta _{13}^3+\zeta _{13}^2+2, \displaybreak[1]\\
& \zeta _{13}^{11}+\zeta _{13}^{10}-\zeta _{13}^9-\zeta _{13}^7-\zeta _{13}^6-\zeta _{13}^4+\zeta _{13}^3+\zeta _{13}^2+3, \\
& \zeta _{13}^{11}+\zeta _{13}^{10}-3 \zeta _{13}^9-3 \zeta _{13}^7-3 \zeta _{13}^6-3 \zeta _{13}^4+\zeta _{13}^3+\zeta _{13}^2+4, \displaybreak[1]\\
& \zeta _{13}^{11}+\zeta _{13}^{10}-3 \zeta _{13}^9-3 \zeta _{13}^7-3 \zeta _{13}^6-3 \zeta _{13}^4+\zeta _{13}^3+\zeta _{13}^2+3, \\
& \zeta _{13}^{11}+\zeta _{13}^{10}-3 \zeta _{13}^9-3 \zeta _{13}^7-3 \zeta _{13}^6-3 \zeta _{13}^4+\zeta _{13}^3+\zeta _{13}^2+3, \displaybreak[1]\\
& -\zeta _{13}^9-\zeta _{13}^7-\zeta _{13}^6-\zeta _{13}^4+1, \\
& -\zeta _{13}^9-\zeta _{13}^7-\zeta _{13}^6-\zeta _{13}^4+1
 \Big\}
\end{align*}
where $\zeta_{13} = \exp(2\pi i/13)$ is a primitive $13$-th root of unity.

This $\cM$ has rank $6$. However its leading $6$-by-$6$ minor is singular; we need to need permute the rows and columns before we can apply Theorem \ref{thm:reduce}. In fact, the computational difficulty of the subsequent calculations depends on the choice made here. The rule of thumb we use is to first permute rows and columns so that the diagonal entries are increasing, and then take the lexicographically least 6 element subset of the rows and columns so that the corresponding minor is non-singular.

We obtain
\begin{equation*}
\cM' = \left(
\begin{array}{cccccc}
 6 & 6 & 3 & 5 & 9 & 13 \\
 6 & 8 & 3 & 5 & 8 & 13 \\
 3 & 3 & 15 & 11 & 30 & 48 \\
 5 & 5 & 11 & 17 & 29 & 45 \\
 9 & 8 & 30 & 29 & 79 & 115 \\
 13 & 13 & 48 & 45 & 115 & 181 \\
\end{array}
\right)
\end{equation*}
with
\begin{equation*}
\cR = \left(
\begin{array}{cccccc}
 1 & 0 & 0 & 0 & 0 & 0 \\
 -1 & 1 & 0 & 1 & 0 & 0 \\
 1 & -1 & 1 & 1 & 0 & 0 \\
 0 & 0 & 0 & 0 & 0 & 1 \\
 -1 & 1 & 2 & 0 & 1 & 0 \\
 0 & 0 & 0 & 0 & 1 & 0 \\
 0 & 1 & 0 & 0 & 0 & 0 \\
 0 & 0 & 0 & 1 & 0 & 0 \\
 0 & 0 & 1 & 1 & 0 & 0 \\
 1 & -1 & 0 & 0 & 0 & 1 \\
 -1 & 1 & 1 & 0 & 1 & 0 \\
 -1 & 1 & 1 & 0 & 1 & 0 \\
 0 & 0 & 1 & 0 & 0 & 0 \\
 0 & 0 & 1 & 0 & 0 & 0 \\
\end{array}
\right)
\end{equation*}

In about 2 minutes of computer time, we find that $\cM'$ has a unique algebraic decomposition. Preparing the corresponding unique algebraic decomposition of $\cM$ according to Theorem \ref{thm:reduce}, we find the combinatorial induction functors for the extended Haagerup subfactor given on the first page of this article.

This method also uniquely finds the combinatorial data of the induction functor for the Haagerup subfactor (which has appeared already in \cite{MR1832764}) and for the Asaeda-Haagerup subfactor, where it is given by

\begin{align*}
I_{AH1} & = \left(
\begin{array}{cccccccccccccccccccccc}
 1 & 1 & 0 & 0 & 0 & 1 & 1 & 1 & 1 & 0 & 0 & 0 & 0 & 0 & 0 & 0 & 0 & 0 & 0 & 0 & 0 & 0 \\
 1 & 0 & 1 & 1 & 1 & 2 & 1 & 1 & 0 & 1 & 1 & 1 & 1 & 1 & 1 & 1 & 1 & 1 & 0 & 0 & 0 & 0 \\
 1 & 2 & 1 & 3 & 1 & 1 & 2 & 2 & 0 & 1 & 2 & 2 & 2 & 2 & 2 & 2 & 2 & 2 & 2 & 2 & 0 & 2 \\
 0 & 1 & 2 & 2 & 1 & 1 & 1 & 2 & 0 & 1 & 1 & 1 & 1 & 1 & 1 & 1 & 1 & 1 & 2 & 2 & 1 & 1 \\
 1 & 3 & 3 & 2 & 1 & 1 & 3 & 3 & 0 & 1 & 3 & 3 & 3 & 3 & 3 & 3 & 3 & 3 & 3 & 3 & 2 & 1 \\
 2 & 2 & 2 & 1 & 1 & 0 & 1 & 0 & 0 & 1 & 1 & 1 & 1 & 1 & 1 & 1 & 1 & 1 & 1 & 1 & 1 & 0 \\
\end{array}
\right) \displaybreak[1]\\ 
I_{AH2} & = \left(
\begin{array}{cccccccccccccccccccccc}
 1 & 0 & 0 & 0 & 0 & 2 & 1 & 1 & 1 & 1 & 0 & 0 & 0 & 0 & 0 & 0 & 0 & 0 & 0 & 0 & 0 & 0 \\
 1 & 1 & 1 & 1 & 1 & 1 & 1 & 1 & 0 & 0 & 1 & 1 & 1 & 1 & 1 & 1 & 1 & 1 & 0 & 0 & 0 & 0 \\
 1 & 1 & 1 & 3 & 1 & 2 & 2 & 2 & 0 & 2 & 2 & 2 & 2 & 2 & 2 & 2 & 2 & 2 & 2 & 2 & 0 & 2 \\
 0 & 2 & 2 & 2 & 1 & 1 & 2 & 3 & 0 & 0 & 2 & 2 & 2 & 2 & 2 & 2 & 2 & 2 & 2 & 2 & 1 & 1 \\
 1 & 3 & 3 & 2 & 1 & 0 & 2 & 2 & 0 & 1 & 2 & 2 & 2 & 2 & 2 & 2 & 2 & 2 & 3 & 3 & 2 & 1 \\
 1 & 1 & 1 & 0 & 0 & 0 & 1 & 0 & 0 & 1 & 1 & 1 & 1 & 1 & 1 & 1 & 1 & 1 & 1 & 1 & 1 & 0 \\
 1 & 1 & 1 & 0 & 0 & 0 & 1 & 0 & 0 & 1 & 1 & 1 & 1 & 1 & 1 & 1 & 1 & 1 & 1 & 1 & 1 & 0 \\
 1 & 1 & 1 & 1 & 1 & 1 & 1 & 1 & 0 & 0 & 1 & 1 & 1 & 1 & 1 & 1 & 1 & 1 & 0 & 0 & 0 & 0 \\
 1 & 1 & 1 & 1 & 1 & 0 & 0 & 0 & 0 & 0 & 0 & 0 & 0 & 0 & 0 & 0 & 0 & 0 & 0 & 0 & 0 & 0 \\
\end{array}
\right)
\end{align*}
For some fusion categories, however, for example the even part of the 4442 subfactor described in \cite{1208.3637}, this method seems to be insufficient, or to at least require a faster implementation. (We stopped the search after a day of computer time.)

This approach to computing the combinatorial induction functor does not appear to be useful for ruling out candidate fusion rings; so far we haven't found an interesting example. In fact, often the method does not produce a unique answer. A simple example is for the principal graphs
$$\left(\bigraph{bwd1v4v1duals1v1},\bigraph{bwd1v4v1duals1v1}\right)$$
which have a unique compatible fusion ring, but four different compatible combinatorial induction functors, with either 4, 6, 7, or 12 simple objects in the centre.

\newcommand{\notfree}{}
\bibliographystyle{alpha}
\bibliography{../../bibliography/bibliography}

\end{document}